\DeclareMathOperator{\spl}{sp}
\newcommand\blfootnote[1]{%
  \begingroup
  \renewcommand\thefootnote{}\footnote{#1}%
  \addtocounter{footnote}{-1}%
  \endgroup
}
\newtheorem{theorem}[subsection]{Theorem}
\newtheorem{proposition}[subsection]{Proposition}
\newtheorem{corollary}[subsection]{Corollary}
\newtheorem{example}[subsection]{Example}
\title[Consequences of the $\mu$-constant condition]{Some consequences of the $\mu$-constant condition for families of surfaces}
\author{MARTA ALDASORO ROSALES}
\date{}
\begin{document}
\maketitle

\begin{abstract}
Let $f : X\to \Delta$ be a $1$-parameter family of $2$-dimensional isolated hypersurface singularities. In this paper, we show that if the Milnor number is constant, then any semistable model, obtained from $f$ after a sufficiently large base change must satisfy non trivial restrictions. Those restrictions are in terms of the dual complex, Hodge structure and numerical invariants of the central fibre.
\end{abstract}


\section{Introduction}

\blfootnote{2020 \textit{Mathematics Subject Classification}. \subjclass{14B05, 14B07, 32B10}}
Let $f : (\mathbb{C}^n,0) \to (\mathbb{C},0)$ be a holomorphic function germ, the \textit{Milnor number} of $f$ is $\mu(f) := \dim_\mathbb{C} \mathbb{C}[[z_1,...,z_n]] / \langle \frac{\partial f}{\partial z_1}, \cdots \frac{\partial f}{\partial z_n}\rangle$. Let $\Delta$ be a disk and consider $f_t : (\mathbb{C}^3,0) \to (\mathbb{C},0)$ to be a family of isolated surface singularities smoothly depending holomorphically on the parameter $t \in \Delta$, with constant Milnor number $\mu$ at the origin. In the second half of the 1970s, a result of L\^e and Ramanujam \cite{11}, combined with work by Timourian \cite{17} and King \cite{6}, makes it possible to prove that if $n\neq 3$ and if $\mu$ is constant then the family is topologically trivial. A long-standing problem is to prove the same statement for $n=3$. For this, it is necessary that, after a finite base change, the total space $X\to (\mathbb{C},0)$ of the family admits a very weak simultaneous resolution as defined by Laufer (see Thm. 6.4 in \cite{10}). Therefore, as pointed out in \cite{2}, a possible way of splitting the L\^e-Ramanujam problem in two pieces is to prove first that the $\mu$ constant condition implies very weak simultaneous resolution after a finite base change, and then try to prove topological triviality using the very weak simultaneous resolution. 

Note that if we define $F(x,t):=f_t(x)$ the total space $X\subset \mathbb{C}^3\times\Delta$ is defined by $\{F=0\}$. Recall that a very weak simultaneous resolution is a flat proper birational morphism $Y\to X$ from a smooth threefold such that for all $t \in \Delta$, $Y_t\to X_t$ is a resolution of singularities. Using the Semistable Reduction theorem, see \cite{7}, Thm. 7.17, we can assume that, after a base change, the total space $X\to (\mathbb{C},0)$ of the family admits a semistable resolution $\pi:W\to X$. Since the problem is local in $\Delta$ we may safely assume that all the exceptional divisors contained in fibres over points of $\Delta$ are contained in the fibre $W_0$. If a very weak simultaneous resolution exists, the semistable resolution may be chosen equal to it, \color{black} and therefore there are no exceptional divisors contained in $W_0$.

Given any resolution $\pi:W\to X$, let $Z\subset W$ be the exceptional divisor, we have $W \setminus W_0$ is a topologically trivial family of surfaces over $\Delta\setminus\{0\}$. Then, by a result of Laufer, see \cite{10}, Lemma 6.2, we can assume that $Z\setminus Z_0$ forms a locally trivial family of curves over $\Delta\setminus\{0\}$, and then the monodromy is a permutation and hence of finite order. So we may assume that our semistable resolution induces the trivial permutation, matching the situation of the simultaneous resolution of a topologically trivial family.

This motivates the results of our paper: we will find restrictions on the possible configurations of exceptional divisors contained in $W_0$. The restrictions will be of combinatorial, Hodge theoretic and topological nature. We would like to see that the total transform of the central fibre has invariants as similar as possible to those of the strict transform. However, as we work in a semistable resolution, our aim can not be to prove that there are no more components in the total transform apart from the strict transform, since that is not true for general semistable resolution, as we can see in the following example.

\begin{example}\label{example}
    Consider the family $ \ \mathbb{C}^2 \times \Delta \to \Delta$, which has constant Milnor number at the origin and perform a blow up with center a rational curve over the central fibre. We get a semistable resolution of the family, where the central fibre has two irreducible components.
\end{example}

\begin{theorem}
Let $\pi : W \to X$ be any semistable resolution obtained after a finite base change, such that the permutation that the monodromy induces on the components of the exceptional divisor $Z_t$ is trivial. Let $\overline{\pi} : \mathfrak{W} \to \mathfrak{X}$ and $\overline{f} : \mathfrak{X} \to \Delta$ be projective morphisms such that $X \subset \mathfrak{X}$, $W \subset \mathfrak{W}$ are open embeddings, $\overline{\pi}_{|W} = \pi$, $\overline{f}_{|X} = f$, $\overline{f}$ have no critical points outside $X$ and $\overline{\pi}_{|\mathfrak{W} \setminus W} : \mathfrak{W} \setminus W \to \mathfrak{X} \setminus X$ is an isomorphism. Then,
\begin{enumerate}
 \item the dual complex (for definition, see \cite{1}) associated to the central fibre of $W$ has the rational homology of a point.
 \item the second cohomology groups of the exceptional divisors of the semistable resolution lying in the central fibre have $h^{2,0}=0$, that is, they have pure Hodge structures of type $(1,1)$ in the middle cohomology group.
 \item the first Betti number of the central fibre of $\mathfrak{W}$ coincides with the first Betti number of the strict transform of $\mathfrak{X}_0$.
 \item the sum of the triple intersections of the irreducible components of the central fibre of $\mathfrak{W}$ equals $-3$ times the number of triple intersections.
\end{enumerate}
\end{theorem}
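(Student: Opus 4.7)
The plan is to combine the standard apparatus for a projective semistable degeneration of smooth surfaces -- the Steenbrink weight spectral sequence, the Clemens--Schmid exact sequence, and the dual-complex description of weight-zero pieces of the limit mixed Hodge structure -- with the input provided by the $\mu$-constant hypothesis and triviality of the monodromy permutation. These hypotheses should force the limit MHS on $H^{*}(\mathfrak{W}_{\infty})$ to be pure, and the four assertions will then follow by reading what this imposes on the combinatorics, Hodge numbers and intersection numbers of the components of $\mathfrak{W}_{0}$.

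I would treat (4) first, as it uses only semistable geometry. Since $\mathfrak{W}_{0}$ is the divisor of $\overline{f}$ pulled back from $\Delta$, the line bundle $\mathcal{O}_{\mathfrak{W}}(\mathfrak{W}_{0})$ is trivial and $\mathfrak{W}_{0}\cdot C=0$ for every compact curve $C\subset\mathfrak{W}_{0}$. Applied to the double curves $D_{ij}=E_{i}\cap E_{j}$ this yields the Persson--Friedman relation
\[ (D_{ij}^{2})_{E_{i}}+(D_{ij}^{2})_{E_{j}}+\sum_{k\neq i,j}\#(E_{i}\cap E_{j}\cap E_{k})=0. \]
Using the normal-bundle identity $N_{E_{i}/\mathfrak{W}}=\mathcal{O}_{E_{i}}(-\sum_{j\neq i}D_{ij})$ to compute $E_{i}^{3}$ and summing over $i$ yields the claimed identity between the triple self-intersections of the components and the number of triple points.

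For (1)--(3) I would use the weight spectral sequence $E_{1}^{p,q}=\bigoplus_{|I|=p+1}H^{q}(E_{I})(-p)\Rightarrow H^{p+q}(\mathfrak{W}_{0})$ and the identification $\Gr^{W}_{0}H^{k}(\mathfrak{W}_{0};\mathbb{Q})\cong H^{k}(\Delta(\mathfrak{W}_{0});\mathbb{Q})$ coming from its first row. The crucial input is purity of the limit MHS: after a sufficiently large base change the monodromy is unipotent, and the trivial-permutation assumption together with $\mu$-constancy should eliminate its nilpotent part $N$. Clemens--Schmid with $N=0$ then forces $H^{k}(\mathfrak{W}_{0})\cong H^{k}(\mathfrak{X}_{t})$ as mixed Hodge structures, so $H^{k}(\mathfrak{W}_{0})$ is pure of weight $k$. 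Purity at weight $0$ gives $H^{k}(\Delta(\mathfrak{W}_{0}))=0$ for $k>0$, which is (1). For (2), the inclusion $\Gr^{W}_{2}H^{2}(\mathfrak{W}_{0})\hookrightarrow\bigoplus_{i}H^{2}(E_{i})$ restricts the $(2,0)$-component; matching this with $H^{2,0}(\mathfrak{X}_{t})$, which is already accounted for by $\widetilde{\mathfrak{X}_{0}}$, forces $h^{2,0}(V_{i})=0$ on each exceptional component $V_{i}$. For (3), the same purity applied in weight $1$, together with the Mayer--Vietoris expression for $H^{1}(\mathfrak{W}_{0})$ in terms of the $H^{1}$ of components and of double curves, identifies $b_{1}(\mathfrak{W}_{0})$ with $b_{1}(\widetilde{\mathfrak{X}_{0}})$.

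The main obstacle is establishing purity of the limit MHS from the stated hypotheses. L\^e--Ramanujam topological triviality is unavailable in ambient dimension three, so one cannot argue via topological triviality of the family. Instead, one must use Steenbrink's MHS on the Milnor fibre of the isolated singularity, the behaviour of its spectrum under $\mu$-constant deformations, and the trivial-permutation assumption to rule out a non-zero nilpotent part of the monodromy on $H^{*}(\mathfrak{W}_{\infty})$. Once purity is in hand, the applications to (1)--(3) are essentially formal, while (4) is independent and follows from the numerical argument above.
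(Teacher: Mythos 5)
Your proposal correctly identifies the general framework (Steenbrink/weight spectral sequence, purity of the limit mixed Hodge structure, dual complex as the weight-zero part), but the step you yourself flag as "the main obstacle" --- deducing purity from the hypotheses --- is exactly where the paper's real work lies, and you do not supply it. The paper's route is concrete: the $\mu$-constant hypothesis enters through L\^e--Ramanujam's homological triviality of the cobordism $f_t^{-1}(0)\cap(\mathbb{B}_{\epsilon_0}\setminus\overset{\circ}{\mathbb{B}}_{\epsilon_t})$, which gives $H_*(W_t,Z_t;\mathbb{Q})=0$; combined with the trivial permutation on the components of $Z_t$, triviality of the family near the boundary, Mayer--Vietoris, Lefschetz duality and nondegeneracy of the resolution intersection form, this shows the monodromy is trivial on all of $H^*(\mathfrak{W}_t;\mathbb{Q})$, whence the weight (= monodromy weight) filtration collapses and the Steenbrink $E_2$-page concentrates in the central column. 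Without this argument, (1)--(3) do not get off the ground. Note also that Clemens--Schmid with $N=0$ does \emph{not} give $H^k(\mathfrak{W}_0)\cong H^k(\mathfrak{W}_t)$ for all $k$: for $k=2$ the classes of the extra components lie in the kernel of the specialization map (see the paper's Example 1.1), so your purity claim for $H^*(\mathfrak{W}_0)$ is false as stated; purity holds for the limit $H^*(\mathfrak{W}_\infty)$.

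The individual items also need more than you provide. For (2), purity only yields $h^{2,0}(\mathfrak{W}_t)=\sum_{i\ge 0}h^{2,0}(E_i)$; your phrase "already accounted for by the strict transform" is precisely what must be proved, and the paper does it by a different mechanism: a putative $(2,0)$-class on $E_i$ ($i>0$) is localized to $\mathring{E}_i$, transported by the specialization map into $H^2(W_\infty)\cong H^2(\tilde{Z}_\infty)$, and contradicted by the fact that the exceptional curves of a surface resolution carry a pure $(1,1)$ structure. For (3), the needed identity $\sum_{i>0}b_1(E_i)=\sum b_1(E_i\cap E_j)$ is not formal; the paper extracts it from constancy of $\chi(\mathcal{O})$ in the flat projective family, Laufer's formula, and constancy of the geometric genus of the singularity under $\mu$-constant deformation (Varchenko, Steenbrink, Saito), none of which appear in your sketch. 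Finally, (4) is not "independent" and cannot follow from semistable geometry alone: carrying out your own computation, the triple point formula gives $\sum_{i\neq j}E_i^2E_j=-3\,b_0(E(3))$, and then $(\sum_i E_i)^3=0$ forces $\sum_i E_i^3=+3\,b_0(E(3))$, the \emph{opposite} sign to the asserted identity (type III degenerations of K3 surfaces already show $-3\,b_0(E(3))$ fails for general semistable degenerations). The paper instead derives (4) from (1)--(3) together with the Noether formula and the Koll\'ar--Shepherd-Barron computation of $K^2_{\mathfrak{W}_t}$, so the $\mu$-constant input is essential there as well; you should revisit this item, since the purely numerical route you outline proves a different (and formally automatic) statement.
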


The way to prove the first two results of the theorem is first use the $\mu$-constant condition to construct a trivial monodromy action on the cohomology groups of a generic fibre of $\mathfrak{W}$ with coefficients in $\mathbb{Q}$. This will give constraints in the first page of the Steenbrink spectral sequence, degenerating to the cohomology groups of these fibres endowed with their limit mixed Hodge structure. These constraints together with some tools from both algebraic topology and mixed Hodge theory will give us the results. To show the third part, we use part (1) and (2) of the theorem together with the fact that a flat projective morphism has constant analytic Euler characteristic over the fibres and that a family of isolated singularities with constant Milnor number has constant geometric genus. For the fourth part, we combine the three previous results with a condition obtained from Steenbrink's spectral sequence and with the technique used by Kollar and Shepherd Barron in their work \cite{8}.

I would like to thank to Javier Fernández de Bobadilla for proposing me the problem and guiding me through it. I am also grateful to Tomasz Pełka for helpful discussions and ideas he has given me in the process.

\subsection*{Acknowledgements}
This research was supported by the grants: Programa Predoctoral de Formación de
Personal Investigador No Doctor of the Basque Government Department of Education, the projects PID2020-117080RB-C55 and PID2020-114750GB-C33 from Spanish Ministry of Science, the projects SEV-2017-0718 and SEV-2023-2026 from Severo Ochoa and FEDER "Una manera de hacer Europa". It was also supported by the Basque Government through the BERC 2022-2025 program and by the Ministry of Science and Innovation: BCAM Severo Ochoa accreditation CEX2021-001142-S / MICIN / AEI / 10.13039/501100011033.

\section{Preliminaries}

\subsection{A homologically trivial cobordism}

For background on the Milnor fibration, see \cite{13}.

A family of hypersurface singularities is given by the germ at $\{O\}\times \Delta_\xi$ of a holomorphic map $F : \mathbb{C}^3 \times \Delta_\xi \to \mathbb{D}_\delta \times \Delta_\xi$ of the form $F(z_1,z_2,z_3,t) = (f_t(z_1,z_2,z_3),t)$. Assume that $f_t$ has an isolated singularity at the origin with Milnor number independent of $t$. Fix $t \in \Delta_{\xi}^{\ast}$ and let $\epsilon_t < \epsilon_0$ and $\delta_t < \delta_0$, Milnor radii for both $f_t$ and $f_0$, respectively. Then, by \cite{11}, after shrinking $\xi > 0$, the restriction
\begin{align*}
    F : (\mathbb{B}_{\epsilon_0} \times \Delta_{\xi}) \cap F^{-1}(\mathbb{D}_{\delta_0}^* \times \Delta_{\xi}) \to \mathbb{D}_{\delta_0}^* \times \Delta_{\xi}
\end{align*}
is a locally trivial fibration and the restriction
\begin{align}\label{fib}
    f_t : \mathbb{B}_{\epsilon_0} \cap f_t^{-1}(\mathbb{D}_{\delta_t}^{\ast}) \to \mathbb{D}^{\ast}_{\delta_t}
\end{align}
is diffeomorphic to the Milnor fibration in the tube for $f_0$. Futhermore, the Milnor fibration in the tube for $f_t$ is contained in the fibration (\ref{fib}). For any $z \in \mathbb{D}_{\delta_t}$, define 
\begin{align*}
    f^{-1}_t(z) \cap (\mathbb{B}_{\epsilon_0} \setminus \overset{\circ}{\mathbb{B}}_{\epsilon_t}), 
\end{align*}
to be the difference between the fibre of (\ref{fib}), and the Milnor fibre of $f_t$ over $z$. It is easy and proven in \cite{11} that this difference is diffeomorphic to 
\begin{align*}
    f^{-1}_t(0) \cap (\mathbb{B}_{\epsilon_0} \setminus \overset{\circ}{\mathbb{B}}_{\epsilon_t}).
\end{align*}
It is a manifold with two boundary components, each of them diffeomorphic to the links of $f_t$ and $f_0$, and we regard it as a cobordism. Under the  $\mu$-constant condition it is proved in \cite{11} that  this cobordism is homologically trivial. That is, we have 
\begin{equation}\label{trivcobor}
H_*(  f^{-1}_t(0)\cap (\mathbb{B}_{\epsilon_0} \setminus \overset{\circ}{\mathbb{B}}_{\epsilon_t}),f^{-1}_t(0)\cap \partial\mathbb{B}_{\epsilon_t}   ;\mathbb{Z})=0 
\end{equation}

\subsection{Geometric set-up and notation} \label{set}
 Consider $f_t : (\mathbb{C}^3,0) \to (\mathbb{C},0)$ be a $\mu$-constant family so that the restriction
\begin{align*}
    F : (\mathbb{B}_{\epsilon_0} \times \Delta) \cap F^{-1}(\mathbb{D}_{\delta_0}^* \times \Delta) \to \mathbb{D}_{\delta_0}^* \times \Delta
\end{align*}
is a locally trivial fibration, as we saw in previous section. Since the singularity is isolated, by finite determinacy (see \cite{12} and \cite{18}), we can assume that $f_t$ are polynomials. 

Let $X_t := f^{-1}_t(0) \cap \mathbb{B}_{\epsilon_0}$, where $\epsilon_0$ is the Milnor radius for the ball of $f_0$ and consider the family $f := F_{|F^{-1}(\{0\} \times \Delta)} : X \to \Delta$, where $X = \cup_{t \in \Delta} \{(X_t,t)\}$. Compactify $\mathbb{C}^3$ to $\mathbb{P}^3$, and let $\mathfrak{X}_t$ be the closure in $\mathbb{P}^3$ of $f^{-1}_t(0)$. After adding to $f_t$ a sufficiently general high order homogeneous term (the same for all $t$), we may assume that $\mathfrak{X}_t$ has the origin as its only singularity. Denote $\mathfrak{X} := \cup_{t \in \Delta} \{(\mathfrak{X}_t,t)\}$ and denote $\bar{f} : \mathfrak{X} \to \Delta$ the projective morphism sending $\mathfrak{X}_t$ to $t$. 

Let $\pi : \mathfrak{W} \to \mathfrak{X}$ to be a semistable resolution of $\bar{f} : \mathfrak{X} \to \Delta$, where if needed we make first a base change according to Semistable Reduction Theorem mentioned before. Let $W := \pi^{-1}(X)$ and $W_t := \pi^{-1}(X_t)$.
For $t\neq 0$, we consider the restriction $\pi_t : W_t \to X_t$, and let $Z_t$ denote the exceptional divisor of $\pi_t$. Let $\mathfrak{W}_0 = E_0 \cup \cup_{i=1}^k E_i$, where $E_0$ is the strict transform of $\mathfrak{X}_0$; then we have $W_0=\tilde{W}_0 \cup \cup_{i=1}^k E_i$, where $\tilde{W}_0 = E_0 \cap W$. Denote $V_t := \mathfrak{W}_t \setminus W_t$. 

Let $Z := \overline{\cup_{t \neq 0} Z_t}$, where the closure is taken in $\mathfrak{W}$ and denote $\mathfrak{W}^{\ast} := \mathfrak{W} \setminus \mathfrak{W}_0$, $W^{\ast} := W \setminus W_0$, $Z^{\ast} := Z \cap \mathfrak{W}^{\ast}$ and $\Delta^{\ast} := \Delta \setminus \{0\}$. Note that ($\mathfrak{W}^{\ast}, W^{\ast}, Z^{\ast}) \to \Delta^{\ast}$ is a locally trivial fibration, where the restriction to $\mathfrak{W}^{\ast} \setminus W^{\ast} \to \Delta^{\ast}$ is trivial. Indeed, by construction, $\mathfrak{W} \setminus W \to \Delta$ is locally trivial and since $\Delta$ is contractible, we have $\mathfrak{W} \setminus W \to \Delta$ is trivial.

\section{Construction of the homologically trivial monodromy}


        

    

On the other hand, using that $\pi$ is an isomorphism out of the singular locus, excision and that $Z_t$ is a deformation retract of $\pi^{-1}(X_t \cap \mathbb{B}_{\epsilon_t})$, we get, with the notations of the previous section,     
    \begin{align*}
        H_i(X_t \cap (\mathbb{B}_{\epsilon_0} \setminus \overset{\circ}{\mathbb{B}}_{\epsilon_t}), X_t \cap \mathbb{S}_{\epsilon_t}) = H_i(\pi^{-1}(X_t \cap (\mathbb{B}_{\epsilon_0} \setminus \overset{\circ}{\mathbb{B}}_{\epsilon_t})), \pi^{-1}(X_t \cap \mathbb{S}_{\epsilon_t})) = \\
       = H_i(\pi^{-1}(X_t \cap \mathbb{B}_{\epsilon_0}), \pi^{-1}(X_t \cap \mathbb{B}_{\epsilon_t})) = H_i(W_t,Z_t).
    \end{align*}

So, combining with the vanishing~(\ref{trivcobor}), we obtain the equivalent statements
\begin{equation}
\label{rrr}
H_i(W_t,Z_t,\mathbb{Q})=0,\quad\quad\quad H_i(Z_t;\mathbb{Q})\stackrel{\cong}{\longrightarrow} H_i(W_t;\mathbb{Q}).
\end{equation}

\color{black}

\begin{proposition}\label{a}
    In the setting of Subsection \ref{set}, the monodromy action on $H^i(\mathfrak{W}_t, \mathbb{Q})$ is trivial, for all $i \in \mathbb{N}$.
\end{proposition}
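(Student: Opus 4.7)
The approach is to apply Mayer--Vietoris to the decomposition $\mathfrak{W}_t = W_t \cup V_t$ from Subsection~\ref{set}, working with open thickenings to obtain an honest cover, and reduce triviality of the monodromy on $H^i(\mathfrak{W}_t;\mathbb{Q})$ to triviality on each of the three pieces $V_t$, $W_t \cap V_t$, and $W_t$. Since the local trivializations of $\mathfrak{W}^* \to \Delta^*$ respect the decomposition, the long exact sequence
\begin{equation*}
\cdots \to H^{i-1}(W_t \cap V_t;\mathbb{Q}) \to H^i(\mathfrak{W}_t;\mathbb{Q}) \to H^i(W_t;\mathbb{Q}) \oplus H^i(V_t;\mathbb{Q}) \to H^i(W_t \cap V_t;\mathbb{Q}) \to \cdots
\end{equation*}
is monodromy-equivariant.

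Triviality on $V_t$ is immediate from the end of Subsection~\ref{set}, where $\mathfrak{W}\setminus W \to \Delta$ is recorded as a trivial family. Triviality on $W_t \cap V_t$ follows from identifying this space, via the isomorphism $\pi$ on a neighborhood of the outer Milnor sphere, with $X_t \cap \mathbb{S}_{\epsilon_0}$, whose bundle structure over $\Delta$ is the trivial boundary restriction of the $\mu$-constant fibration $F$ of Subsection~2.1. Triviality on $W_t$ is controlled by the monodromy-equivariant isomorphism $H^*(Z_t;\mathbb{Q})\cong H^*(W_t;\mathbb{Q})$ of~(\ref{rrr}): by Laufer's lemma the family $Z\setminus Z_0 \to \Delta^*$ is locally trivial with monodromy a permutation of irreducible components, trivial by the hypothesis of the theorem, so the action on $H^*(Z_t;\mathbb{Q})$ is of finite order; a further finite base change permitted by the setup of the theorem then trivializes it.

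To pass from triviality on the three pieces to triviality on the middle term, I would reformulate via the Wang sequence: triviality of monodromy on $H^i$ of a fibre is equivalent to surjectivity of the restriction from $H^i$ of the total space. A diagram chase in the commutative Mayer--Vietoris diagram for $\mathfrak{W}^*$ and $\mathfrak{W}_t$, using Künneth on the trivial pieces $V$ and $V\cap W$ to describe the kernels of restriction there, and the equivariant isomorphism of~(\ref{rrr}) together with the triviality on $H^*(Z_t;\mathbb{Q})$ to describe the kernel on $W$, should then yield the desired surjectivity on $\mathfrak{W}$.

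The main obstacle is this last gluing step: triviality of monodromy on the three outer Mayer--Vietoris terms does not automatically force triviality on the middle, because the middle could sit in a unipotent but non-trivial extension compatible with triviality on both sides. Resolving this requires sufficiently precise control of the kernels of the restriction maps $H^i(U^*;\mathbb{Q})\to H^i(U_t;\mathbb{Q})$ for each of the three pieces, so that the five-lemma-style diagram chase goes through; this is where the specific combinatorics of the semistable resolution and the full strength of the $\mu$-constant geometry (beyond merely the permutation-triviality hypothesis) must be brought to bear.
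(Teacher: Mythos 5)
Your setup matches the paper's: the same decomposition $\mathfrak{W}_t = W_t \cup V_t$, the same Mayer--Vietoris sequence, and the same treatment of $V_t$ and of $\partial W_t = W_t\cap V_t$ (the geometric monodromy of the triple $(\mathfrak{W}_t,W_t,Z_t)$ is the identity outside $W_t$). But the step you flag as ``the main obstacle'' is exactly the content of the paper's proof, and you have not supplied it. The paper closes the unipotent-extension loophole by proving that the Mayer--Vietoris restriction $H_i(W_t)\oplus H_i(V_t)\to H_i(\mathfrak{W}_t)$ is \emph{surjective} for $i=1,2$; combined with equivariance and triviality on the source this forces triviality on the target, with no diagram chase on kernels needed. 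Surjectivity in degree $2$ reduces to injectivity of the connecting map, i.e.\ to $H_1(\partial W_t)\to H_1(W_t)$ being an isomorphism, and \emph{this} is where the $\mu$-constant hypothesis actually enters: via the L\^e--Ramanujam homologically trivial cobordism~(\ref{trivcobor}), which identifies $H_2(W_t,\partial W_t)$ with $H_2(W_t\cap\pi^{-1}(\mathbb{B}_{\epsilon_t}),\partial)$, together with Lefschetz duality and the nondegeneracy of the intersection form on the resolution $Z_t$, one shows $H_2(W_t)\to H_2(W_t,\partial W_t)$ is an isomorphism and $H_1(W_t,\partial W_t)\cong H^3(W_t)=0$. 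Your proposal invokes ``the full strength of the $\mu$-constant geometry'' at this point without saying what it is; the argument just sketched is the missing ingredient. Note also that degree $3$ is not covered by your scheme at all: the paper handles $H_3(\mathfrak{W}_t)$ by the perfect intersection pairing $H_3(\mathfrak{W}_t)\otimes H_1(\mathfrak{W}_t)\to\mathbb{Q}$ once triviality on $H_1$ is known, and then passes from homology to cohomology by Poincar\'e duality.

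Two smaller points. First, your treatment of $H^1(W_t)$ (finite-order monodromy on $H^1(Z_t)$ killed by a further base change) is both unnecessary and slightly off-message: the paper instead uses $H_1(W_t)\cong H_1(\partial W_t)$ --- itself a consequence of the cobordism argument above --- and the fact that the geometric monodromy is the identity on $\partial W_t$, so no additional base change is required and the statement holds for the resolution as given. Second, your Wang-sequence reformulation (triviality on $H^i$ of the fibre is equivalent to surjectivity of restriction from the total space) is correct but does not by itself make the gluing any easier; it transfers the problem to the same surjectivity statement that must be extracted from~(\ref{trivcobor}) and~(\ref{rrr}).
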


\begin{proof}
    As we noticed in Subsection \ref{set}, ($\mathfrak{W}^{\ast},W^{\ast},Z^{\ast}) \to \Delta^{\ast}$ is a locally trivial fibration, where  $\mathfrak{W}^{\ast}\setminus W^{\ast} \to \Delta^{\ast}$ is trivial. Thus, there exists a geometric monodromy $h : (\mathfrak{W}_t, W_t, Z_t) \to (\mathfrak{W}_t, W_t, Z_t)$ such that $h_{|\mathfrak{W}_t \setminus W_t} = \mathrm{Id}$. Moreover, we have chosen our base change such that each irreducible component of $Z_t$ is sent to itself by the monodromy. 
    
    Now, on $H_0(W_t) \cong \mathbb{Q}, H_3(W_t) = 0$ and $H_4(W_t) = 0$ the monodromy action is obviously trivial, and since $H_2(W_t) \cong H_2(Z_t)$, and the monodromy induces the trivial permutation at the set of irreducible components of $Z_t$, is also trivial on $H_2(W_t)$. Finally, on one hand, we claim that $H_1(W_t) \cong H_1(\partial W_t)$, which will be shown in (2) below. On the other hand, the geometric monodromy on $\partial W_t$ being the identity implies that the monodromy action on $H_i(\partial W_t)$ is trivial. Therefore, we get that the monodromy action is trivial on each $H_i(W_t)$. 

    Now, to prove that it is also trivial on $H_i(\mathfrak{W}_t, \mathbb{Q})$, we consider the Mayer Vietoris long exact sequence (see \cite{5} Ch. 2):
    \begin{multline*}
    0 \to H_4(\mathfrak{W}_t) \cong \mathbb{Q} \to H_3(\partial W_t) \cong \mathbb{Q} \to 0 \oplus H_3(V_t) \to H_3(\mathfrak{W}_t) \to H_2(\partial W_t) \to \\
    \to H_2(W_t) \oplus H_2(V_t) \to H_2(\mathfrak{W}_t) \to H_1(\partial W_t) \to H_1(W_t) \oplus H_1(V_t) \to H_1(\mathfrak{W}_t) \to 0
    \end{multline*}
    We will denote by $\mathbb{T}_i^{(-)}$ the monodromy action on $H_i(-, \mathbb{Q})$.
    \begin{enumerate}
        \item [(1)] We have that $H_1(W_t) \oplus H_1(V_t) \to H_1(\mathfrak{W}_t)$ is surjective and, since assuming the claim above, $\mathbb{T}_1^{W_t} = \mathbb{T}_1^{V_t} = \mathrm{Id}$ and the monodromy action commutes with the morphism, we get $\mathbb{T}_1^{\mathfrak{W}_t} = \mathrm{Id}$.

        \item [(2)] If we had $H_2(W_t) \oplus H_2(V_t) \to H_2(\mathfrak{W}_t)$ to be surjective by the same argument we will get that $\mathbb{T}_2^{\mathfrak{W}_t} = \mathrm{Id}$. For this, we show that $H_1(\partial W_t) \to H_1(W_t) \oplus H_1(V_t)$ is injective, and for this, the claim that $H_1(\partial W_t) \to H_1(W_t)$ is an isomorphism suffices. Now, to prove it, consider 
        $$
        H_2(W_t) \xrightarrow{\alpha} H_2(W_t, \partial W_t) \to H_1(\partial W_t) \to H_1(W_t) \to H_1(W_t, \partial W_t)
        $$
        We have the following commutative diagram where the vertical arrows are isomorphisms:
        \begin{center}
        \begin{tikzcd}
        H_2(W_t) \arrow[r, "\alpha"]  & H_2(W_t, \partial W_t)  \arrow[d, "\cong"] \\
         & H_2(W_t, W_t \cap \pi^{-1}(\mathbb{B}_{\epsilon_0} \setminus \overset{\circ}{\mathbb{B}}_{\epsilon_t})) \\
         H_2(W_t \cap \pi^{-1}(\mathbb{B}_{\epsilon_t})) \arrow[uu, "\cong"] \arrow[r] & H_2(W_t \cap \pi^{-1}(\mathbb{B}_{\epsilon_t}), \partial (W_t \cap \pi^{-1}(\mathbb{B}_{\epsilon_t}))) \arrow[u, "\cong"]
        \end{tikzcd}
        \end{center}
        Indeed, the isomorphism on the left is obtained from the Mayer-Vietoris long exact sequence for the decomposition 
        \begin{align*}
            W_t=W_t \cap \pi^{-1}(\mathbb{B}_{\epsilon_0} \setminus \overset{\circ}{\mathbb{B}}_{\epsilon_t})\cup W_t \cap \pi^{-1}(\mathbb{B}_{\epsilon_t})
        \end{align*}
        and the vanishing of Equation~(\ref{trivcobor}).  
        
        The bottom right isomorphism comes from excision and the top one from the long exact sequence of the triple $(W_t, W_t \cap \pi^{-1}(\mathbb{B}_{\epsilon_0} \setminus \overset{\circ}{\mathbb{B}}_{\epsilon_t}), \partial W_t)$, see \cite{5}, pg. 118:
        \begin{center}
        $H_i(W_t \cap \pi^{-1}(\mathbb{B}_{\epsilon_0} \setminus \overset{\circ}{\mathbb{B}}_{\epsilon_t}), \partial W_t) \to H_i(W_t, \partial W_t) \to H_i(W_t, W_t \cap \pi^{-1}(\mathbb{B}_{\epsilon_0} \setminus \overset{\circ}{\mathbb{B}}_{\epsilon_t})),$
        \end{center}
        and the vanishing of Equation~(\ref{trivcobor}).
        
        Now, Lefschetz duality gives us $H_2(W_t \cap \pi^{-1}(\mathbb{B}_{\epsilon_t}), \partial (W_t \cap \pi^{-1}(\mathbb{B}_{\epsilon_t}))) \cong H^2(W_t \cap \pi^{-1}(\mathbb{B}_{\epsilon_t}))$ and the morphism $H_2(W_t \cap \pi^{-1}(\mathbb{B}_{\epsilon_t})) \to H^2(W_t \cap \pi^{-1}(\mathbb{B}_{\epsilon_t}))$ is an isomorphism since $W_t \cap \pi^{-1}(\mathbb{B}_{\epsilon_t})$ retracts to $Z_t$ and the intersection form is nondegenate at the resolution. We conclude that $\alpha$ is an isomorphism. Besides, using Lefschetz duality $H_1(W_t,\partial W_t) \cong H^3(W_t) = 0$, we get $H_1(\partial W_t) \xrightarrow{\cong} H_1(W_t)$ as needed.

        \item[(3)] The intersection form $H_3(\mathfrak{W}_t) \otimes H_1(\mathfrak{W}_t) \to \mathbb{Q}$ is a perfect pairing which is preserved by the monodromy. We have 
        \begin{align*}
            <[z],[w]> = <\mathbb{T}_3([z]),\mathbb{T}_1([w])> = <\mathbb{T}_3([z]),[w]>,
        \end{align*}
        then $<\mathbb{T}_3([z]) - [z],[w]> = 0$ for all $[w] \in H_1(\mathfrak{W}_t)$. So since the map is a perfect pairing, we conclude $\mathbb{T}_3([z]) = [z]$.
    \end{enumerate}
    
    Now, using the Poincar\'e Duality isomorphisms $H^i(\mathfrak{W}_t) \cong H_{n-i}(\mathfrak{W}_t)$, we get the desired result.
\end{proof}

\section{The Steenbrink spectral sequence}

In this section, our goal is to recall the Steenbrink spectral sequence and obtain restrictions on its first page. For all the results and definitions we refer to \cite{15}.

We start with the proper regular function $g := \bar{f} \circ \pi : \mathfrak{W} \to \Delta$ defined in Subsection \ref{set}. For simplicity, we will denote $E = \mathfrak{W}_0$ and we set the following notation:
\begin{flushleft}
    \hspace{12.5mm} $E_I = \cap_{i \in I} E_{i}$, where $I \subset \{0,...,k\}$; \\
    \hspace{12.5mm} $E(m) = \bigsqcup _{|I|=m} E_I$, where $m \in \{1,...,k+1\}$; \\
    \hspace{12.5mm} $a_I : E_I \hookrightarrow \mathfrak{W};$ \\
    \hspace{12.5mm} $a_m = \bigsqcup_{|I| = m} a_I : E(m) \to \mathfrak{W}.$
\end{flushleft}

Introduce the universal cover $e : \mathfrak{h} \to \Delta^{\ast}$, where $e(u) = \exp(2 \pi iu)$ and put
\begin{equation*}
    \mathfrak{W}_\infty := \{(w,u) \in \mathfrak{W} \times \mathfrak{h} \ | \ f(w) = e(u) \}.  
\end{equation*}
Let $k : \mathfrak{W}_\infty \to \mathfrak{W}$ be the projection on $\mathfrak{W}$ and $g_\infty := g \circ k : \mathfrak{W}_\infty \to \Delta$. Note that the fibre $\mathfrak{W}_\infty$ is homotopic to any fibre $\mathfrak{W}_t$ of $g$, whenever $t \neq 0$, since $g_\infty$ is differentiably a product.

Moreover, the total space $\mathfrak{W}$ is homotopy equivalent to $E$ by a strong deformation retraction $r : \mathfrak{W} \to E$. So the composition of the inclusion $i_t : \mathfrak{W}_t \hookrightarrow \mathfrak{W}$ followed by the retraction can be seen as the \textit{specialization map} $r_t : \mathfrak{W}_t \to E$. 

Then, Cor. 11.23 in \cite{15} tells us that the spectral sequence 
\begin{equation}\label{14}
    E_1^{-r,q+r} = \oplus_{k \geq 0,-r} H^{q-r-2k}(E(r+2k+1))(-r-k) \Longrightarrow H^q(\mathfrak{W}_\infty,\mathbb{Q})
\end{equation}
degenerates at the term $E_2$, where $E_1^{-r,q+r}$ is a Hodge structure of weight $q + r$. Moreover, it endows $H^q(\mathfrak{W}_\infty,\mathbb{Q})$ with a mixed Hodge structure.

\begin{proposition}\label{rr}
    The first page (\ref{14}) of the Steenbrink spectral sequence is the one shown below, it satisfies the surjectivity and injectivity conditions shown on it and is exact at $E^{-1,4}$ and at $E^{1,0}$. 
\end{proposition}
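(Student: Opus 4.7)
The plan is to deduce the structure of the $E_1$ page from the triviality of the monodromy established in Proposition \ref{a}, using the identification of the abutment of the Steenbrink spectral sequence with the limit mixed Hodge structure. First, since $\mathfrak{W}_\infty$ is homotopy equivalent to any smooth fibre $\mathfrak{W}_t$, Proposition \ref{a} implies that the monodromy operator $T$ acts as the identity on $H^q(\mathfrak{W}_\infty,\mathbb{Q})$ for every $q$, and hence its logarithm $N = \log T$ vanishes.

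Next, the limit mixed Hodge structure on $H^q(\mathfrak{W}_\infty,\mathbb{Q})$ has as its weight filtration the monodromy weight filtration attached to $N$ and centered at $q$; recall that this filtration is uniquely characterized by the property that $N^k$ induces an isomorphism $\mathrm{Gr}^W_{q+k} \cong \mathrm{Gr}^W_{q-k}$ for every $k \geq 1$. When $N = 0$ both sides must vanish, so the limit mixed Hodge structure is pure of weight $q$, i.e. $\mathrm{Gr}^W_{\ell} H^q(\mathfrak{W}_\infty,\mathbb{Q}) = 0$ for $\ell \neq q$. Combined with the $E_2$-degeneration of Cor.~11.23 in \cite{15} and the identification $E_\infty^{-r,q+r} = \mathrm{Gr}^W_{q+r} H^q(\mathfrak{W}_\infty,\mathbb{Q})$, this yields $E_2^{-r,q+r} = 0$ whenever $r \neq 0$.

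Finally, the vanishing of $E_2^{-r,q+r}$ for $r \neq 0$ means that the row at height $b = q+r$ of the first page, viewed as a complex with differential $d_1$, is exact away from the zero column. Position by position this gives: injectivity of the outgoing $d_1$ wherever the term immediately to the left is zero; surjectivity of the incoming $d_1$ wherever the term immediately to the right is zero; and full exactness precisely at $E^{-1,4}$ and $E^{1,0}$, the two positions whose nearest horizontal neighbours in the $E_1$ picture are both nonzero in our surface setting (as can be read off from the explicit formula for $E_1^{-r,q+r}$ together with $\dim_{\mathbb{C}} E(m) = 3-m$). The principal obstacle is the transition from the topological statement of Proposition \ref{a} to the purity of the limit mixed Hodge structure, since this requires invoking the monodromy weight filtration characterization together with the fact that the limit and nearby cohomologies agree as $\mathbb{Q}$-vector spaces; once that is secured, the rest is a mechanical unpacking of the spectral sequence.
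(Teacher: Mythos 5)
Your proposal is correct and follows essentially the same route as the paper: both arguments combine the $E_2$-degeneration, the identification of the weight filtration on $H^q(\mathfrak{W}_\infty)$ with the monodromy weight filtration (Cor.~11.42 in \cite{15}), and the triviality of the monodromy from Proposition~\ref{a} to conclude that the second page is concentrated in the central column, whence the rows of $E_1$ are exact off that column. Your write-up merely makes explicit two steps the paper leaves implicit, namely $N=0\Rightarrow$ purity via the characterization of the monodromy weight filtration, and the determination of which $E_1$-terms vanish from $\dim E(m)=3-m$.
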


\tikzset{ 
table/.style={
  matrix of nodes,
  row sep=-\pgflinewidth,
  column sep=-\pgflinewidth,
  nodes={rectangle,align=center},
  text depth=1.25ex,
  text height=2.5ex,
  nodes in empty cells
},
row 6/.style={nodes={fill=green!10,text depth=0.4ex,text height=2ex}},
column 1/.style={nodes={text width=0.6em, fill=green!10}},
column 2/.style={nodes={text width=6.7em}},
column 3/.style={nodes={text width=6.7em}},
column 4/.style={nodes={text width=11.6em}},
column 5/.style={nodes={text width=4.8em}},
column 6/.style={nodes={text width=4.8em}},
}
\hspace*{-1cm}\begin{tikzpicture}

  \matrix (m) [table]
  {
          4     &  $H^0(E(3))(-2)$ &  $H^2(E(2))(-1)$  & $H^4(E(1)$) & 0 & 0 \\
          3     &  0 &  $H^1(E(2))(-1)$ & $H^3(E(1))$  & 0 & 0 \\   
          2     &  0  &  $H^0(E(2))(-1)$   &  $H^2(E(1)) \oplus H^0(E(3))(-1)$ &  $H^2(E(2))$  &  0  \\
          1     &  0 &  0  & $H^1(E(1))$  & $H^1(E(2))$ & 0 \\
          0     &  0  & 0 &  $H^0(E(1))$  & $H^0(E(2))$  & $H^0(E(3))$ \\
    \quad\strut & -2 & -1 &  0  &  1  &  2  \strut \\};

    \foreach \x in {1,...,5}
{
  \draw 
    ([xshift=-.5\pgflinewidth]m-\x-1.south west) --   
    ([xshift=-.5\pgflinewidth]m-\x-6.south east);
  }
\foreach \x in {1,...,5}
{
  \draw 
    ([yshift=.5\pgflinewidth]m-1-\x.north east) -- 
    ([yshift=.5\pgflinewidth]m-6-\x.south east);
}  

\draw[thick] (m-1-1.north west) -- (m-6-1.south west);
\draw[thick] (m-1-6.north east) -- (m-6-6.south east);
\draw[thick] (m-1-1.north west) -- (m-1-6.north east);
\draw[thick] (m-6-1.south west) -- (m-6-6.south east);
    
\begin{scope}
\draw[shorten <=11.25mm, shorten >=11.25mm,right hook->] (m-1-2.center) -- (m-1-3.center) ;
\draw[shorten <=11.5mm, shorten >=11.25mm,->] (m-1-3.center) -- (m-1-4.center);
\draw[shorten <=11.5mm, shorten >=11.25mm,right hook->] (m-2-3.center) -- (m-2-4.center);
\draw[shorten <=11.35mm, shorten >=20.8mm,right hook->] (m-3-3.center) -- (m-3-4.center);
\draw[shorten <=20.8mm, shorten >=8mm,->>] (m-3-4.center) -- (m-3-5.center);
\draw[shorten <=11.25mm, shorten >=8mm,->>] (m-4-4.center) -- (m-4-5.center);
\draw[shorten <=12.25mm, shorten >=8mm, ->] (m-5-4.center) -- (m-5-5.center);
\draw[shorten <=7mm, shorten >=8mm,->>] (m-5-5.center) -- (m-5-6.center);
\end{scope}

\end{tikzpicture}

\begin{proof}
    The spectral sequence degenerates at $E_2$. Since the weight filtration on the limit mixed Hodge structure is the monodromy weight filtration, see Cor. 11.42 in \cite{15}, and the monodromy action on $H^\ast(\mathfrak{W}_t)$ is trivial, we get that the second page is concentrated at the central column, giving the desired result.
\end{proof}

\begin{corollary}
    The limit mixed Hodge structure that the constructed complex puts on $H^k(\mathfrak{W}_\infty)$ is in fact a pure Hodge structure of weight $k$. 
\end{corollary}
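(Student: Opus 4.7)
The plan is to read purity directly off the second page of the Steenbrink spectral sequence. Recall that the weight filtration on the limit mixed Hodge structure of $H^q(\mathfrak{W}_\infty,\mathbb{Q})$ coincides with the monodromy weight filtration and is induced by the abutment (\ref{14}); concretely,
\[
\mathrm{Gr}^W_{q+r}\,H^q(\mathfrak{W}_\infty,\mathbb{Q}) \;\cong\; E_\infty^{-r,\,q+r},
\]
and each piece $E_1^{-r,q+r}$ is pure of weight $q+r$ (see \cite[Cor.~11.23 and Cor.~11.42]{15}). The strategy is then simply to observe that all off-central pieces of $E_\infty$ vanish, so that the only surviving weight on $H^q$ is $q$.

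The key input has already been supplied by Proposition~\ref{rr}: triviality of the monodromy on $H^\bullet(\mathfrak{W}_t,\mathbb{Q})$ forces the second page of the Steenbrink spectral sequence to be concentrated in the central column $r=0$. Since the spectral sequence degenerates at $E_2$, we have $E_\infty^{-r,\,q+r} = E_2^{-r,\,q+r} = 0$ for every $r \neq 0$, so $\mathrm{Gr}^W_w H^q(\mathfrak{W}_\infty,\mathbb{Q}) = 0$ whenever $w \neq q$. The limit MHS on $H^q$ is therefore concentrated in weight $q$ and hence pure of weight $q$, as claimed.

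There is no real obstacle: the corollary is a repackaging of Proposition~\ref{rr}. The $\mu$-constant hypothesis enters only through the triviality of the monodromy action on the cohomology of a nearby fibre (Proposition~\ref{a}), which was the input used to collapse the second page to its central column. The content of the corollary is the recognition that the vanishing of off-central $E_2$-terms is exactly equivalent to purity of the limit Hodge structure, so no further computation is needed.
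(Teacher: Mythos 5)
Your argument is correct and is exactly the one the paper intends: the corollary follows immediately from Proposition~\ref{rr}, since degeneration at $E_2$ together with concentration of the second page in the central column forces $\mathrm{Gr}^W_w H^q(\mathfrak{W}_\infty,\mathbb{Q})=0$ for $w\neq q$. No difference in approach.
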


\section{Main results}

We will finish this paper giving the proofs for the main theorems. 

\begin{theorem}\label{1imp}
    The rational homology of the dual complex associated to the central fibre $\tilde{W}_0$ is isomorphic to rational homology of a point.
\end{theorem}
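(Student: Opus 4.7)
The plan is to identify the rational simplicial cochain complex of the dual complex of the central fibre of $W$ with the bottom row ($q=0$) of the Steenbrink $E_1$ page displayed in Proposition \ref{rr}, and then read the vanishing of its higher cohomology directly from the exactness and surjectivity already established there.

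First I would check that the dual complex of $W_0 = \tilde{W}_0 \cup E_1 \cup \cdots \cup E_k$ coincides with the dual complex of the full central fibre $\mathfrak{W}_0 = E_0 \cup E_1 \cup \cdots \cup E_k$. Every $E_i$ with $i \geq 1$ is contracted by $\pi$ to the singular point of $X$ and so lies entirely inside $W$; consequently every intersection $E_0 \cap E_{i_1} \cap \cdots \cap E_{i_r}$ with all $i_j \geq 1$ is automatically contained in $W$ and agrees with $\tilde{W}_0 \cap E_{i_1} \cap \cdots \cap E_{i_r}$, while intersections not involving $E_0$ are already inside $W$. Hence the two simplicial structures match, and it suffices to work with $\mathfrak{W}_0$.

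Next I would invoke the standard identification (see Sec.~11 of \cite{15}) that the bottom row of the weight spectral sequence of an SNC divisor,
\[
0 \longrightarrow H^0(E(1)) \longrightarrow H^0(E(2)) \longrightarrow H^0(E(3)) \longrightarrow 0,
\]
with its $d_1$ differential, is canonically the rational simplicial cochain complex of the dual complex: $H^0(E(m))$ counts the $(m-1)$-simplices and $d_1$ is the alternating-sum restriction map. Because $\dim \mathfrak{W} = 3$, the locus $E(m)$ is empty for $m \geq 4$, so there are no further terms.

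Finally I would read off the cohomology. Proposition \ref{rr} supplies exactness at $E_1^{1,0}$ and surjectivity at $E_1^{2,0}$, which immediately kills $H^1$ and $H^2$ of the dual complex. For $H^0$, either one invokes that $E_2^{0,0}$ computes $\Gr^W_0 H^0(\mathfrak{W}_\infty,\mathbb{Q})$, which equals $\mathbb{Q}$ since $\mathfrak{W}_\infty$ is homotopy equivalent to the smooth connected projective variety $\mathfrak{W}_t$, or more concretely observes that $\mathfrak{W}_0$ is connected because $\bar f \circ \pi$ is a proper flat morphism from an irreducible total space with connected generic fibre. Either way, $\ker(H^0(E(1)) \to H^0(E(2))) = \mathbb{Q}$, so the dual complex has rational cohomology $\mathbb{Q}$ in degree $0$ and zero above; passing to homology by universal coefficients over $\mathbb{Q}$ gives the claim. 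The heavy lifting has been done in Propositions \ref{a} and \ref{rr}, so the only mild subtlety is the identification of the two dual complexes in the first step, which is immediate from the geometric set-up of Subsection \ref{set}.
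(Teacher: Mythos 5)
Your proposal is correct and follows essentially the same route as the paper: identify the bottom row of the Steenbrink $E_1$-page with the simplicial cochain complex of the dual complex, use the exactness and surjectivity from Proposition \ref{rr} to kill the higher cohomology, and use $H^0(\mathfrak{W}_\infty,\mathbb{Q})\cong\mathbb{Q}$ together with degeneration at $E_2$ to pin down the kernel at $H^0(E(1))$. The only difference is that you spell out the identification of the dual complexes of $W_0$ and $\mathfrak{W}_0$ in more detail than the paper does, which is a welcome addition rather than a deviation.
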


\begin{proof}
    Since the spectral sequence degenerates at $E_2$ and $H^0(\mathfrak{W}_\infty,\mathbb{Q}) \cong \mathbb{Q}$, we get that the kernel of the map $H^0(E(1),\mathbb{Q}) \to H^0(E(2),\mathbb{Q})$ on the first page of the spectral sequence of Prop. \ref{rr} is isomorphic to $\mathbb{Q}$. Thus, using exactness of the bottom row, and noting that the bottom row compute the rational cohomology of the dual complex of $E$ and that the dual complex of $E$ and $\tilde{W}_0$ are the same, we conclude.
    
\end{proof}

\begin{theorem}\label{2imp}
    The second cohomology groups of the exceptional divisors $E_i$ for $i > 0$, are Hodge structures of pure type $(1,1)$.
\end{theorem}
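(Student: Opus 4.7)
The plan is to isolate the $(2,0)$-Hodge piece of $H^2(\mathfrak{W}_\infty)$ from the Steenbrink spectral sequence and compare it against the geometric genus of a smooth resolution of $\mathfrak{X}_t$, then use constancy of $\chi(\mathcal{O})$ in flat families together with the classical result that the local geometric genus is constant along $\mu$-constant families.

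By Proposition~\ref{rr}, $H^2(\mathfrak{W}_\infty)\cong E_2^{0,2}$ is computed as the cohomology of
\[
H^0(E(2))(-1)\longrightarrow H^2(E(1))\oplus H^0(E(3))(-1)\longrightarrow H^2(E(2)).
\]
Each of $H^0(E(2))(-1)$, $H^0(E(3))(-1)$ and $H^2(E(2))$ is pure of Hodge type $(1,1)$: the first two are Tate twists of zeroth cohomologies of disjoint unions, and the third is the top cohomology of a disjoint union of smooth curves. Restricting the complex to its $(2,0)$-part therefore reduces it to $0\to\bigoplus_{i=0}^{k}H^{2,0}(E_i)\to 0$. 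Since by the corollary after Proposition~\ref{rr} the LMHS on $H^2(\mathfrak{W}_\infty)$ is pure of weight $2$, and the triviality of monodromy forces its Hodge numbers to coincide with those of the nearby smooth fibre $\mathfrak{W}_t$, we obtain
\[
p_g(\mathfrak{W}_t)=\sum_{i=0}^{k}p_g(E_i).
\]

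It suffices to prove $p_g(\mathfrak{W}_t)=p_g(E_0)$, since then $\sum_{i>0}p_g(E_i)=0$ forces $p_g(E_i)=0$ for each $i>0$, and Hodge symmetry $h^{0,2}(E_i)=h^{2,0}(E_i)$ makes $H^2(E_i)$ pure of type $(1,1)$. For any resolution $\tilde{\mathfrak{X}}_t\to\mathfrak{X}_t$, Leray combined with Grauert--Riemenschneider yields $\chi(\mathcal{O}_{\tilde{\mathfrak{X}}_t})=\chi(\mathcal{O}_{\mathfrak{X}_t})-p_g(\mathfrak{X}_t,0)$; constancy of $\chi(\mathcal{O}_{\mathfrak{X}_t})$ (by flatness of $\bar f$) together with the classical theorem that $p_g(\mathfrak{X}_t,0)$ is constant along $\mu$-constant families makes $\chi(\mathcal{O}_{\tilde{\mathfrak{X}}_t})$ constant in $t$, giving $p_g(\mathfrak{W}_t)-q(\mathfrak{W}_t)=p_g(E_0)-q(E_0)$. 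Moreover, the adjoint map $H^0(\omega_{\mathfrak{X}_t})\to H^0(\omega_{\mathfrak{X}_t}/\pi_*\omega_{\tilde{\mathfrak{X}}_t})$, whose image has dimension $p_g(\mathfrak{X}_t,0)-q(\tilde{\mathfrak{X}}_t)$ (via $H^1(\omega_{\mathfrak{X}_t})=0$ for a hypersurface), is intrinsic to $\mathfrak{X}_t$ by Grauert--Riemenschneider and varies holomorphically in $t$ with source and target of constant dimension. Its rank is therefore lower semicontinuous, so $q(\tilde{\mathfrak{X}}_t)$ is upper semicontinuous, giving $q(\mathfrak{W}_t)\le q(E_0)$. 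Combined with $p_g(\mathfrak{W}_t)\ge p_g(E_0)$ from the spectral-sequence identity, this forces $p_g(\mathfrak{W}_t)=p_g(E_0)$.

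The main obstacle is the careful bookkeeping in the second step: one must verify the semicontinuity of the adjoint rank through the degeneration, and correctly invoke the $p_g$-constancy result to compare the resolutions $\mathfrak{W}_t\to\mathfrak{X}_t$ and $E_0\to\mathfrak{X}_0$. The spectral-sequence extraction in the first step is routine once Proposition~\ref{rr} is in hand.
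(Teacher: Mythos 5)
Your first step is a genuinely different (and clean) reduction from the one in the paper: by extracting the $(2,0)$-part of the complex computing $E_2^{0,2}$ and using that the limit Hodge filtration has the same graded dimensions as that of the nearby fibre, you correctly obtain $p_g(\mathfrak{W}_t)=\sum_{i\ge 0}h^{2,0}(E_i)$, so the theorem is equivalent (via Hodge symmetry on the compact surfaces $E_i$) to the single identity $p_g(\mathfrak{W}_t)=p_g(E_0)$. Combined with Laufer's formula and the constancy of $\chi(\mathcal{O}_{\mathfrak{X}_t})$ and of the local geometric genus, this correctly reduces everything to the inequality $q(\mathfrak{W}_t)\le q(E_0)$. (The paper instead argues by contradiction: a nonzero class in $H^{2,0}(E_i)$ is lifted to $H^2_c(\mathring E_i)$, pushed through the specialization map into $H^2(\mathfrak{W}_\infty)$, localized into $H^2(W_\infty)$ via the Mayer--Vietoris injectivity from Proposition~\ref{a}, and then killed by comparing with $H^2(\tilde Z_\infty)$, which is of type $(1,1)$ because $Z_t$ is a curve.)

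The gap is in the semicontinuity step. You assert that the adjunction maps $H^0(\omega_{\mathfrak{X}_t})\to H^0(\omega_{\mathfrak{X}_t}/\pi_*\omega_{\tilde{\mathfrak{X}}_t})$ ``vary holomorphically in $t$ with source and target of constant dimension,'' and deduce lower semicontinuity of the rank. But the target is built from $\pi_{t*}\omega_{\tilde{\mathfrak{X}}_t}$ for a resolution of each \emph{individual} fibre, and the family admits no simultaneous resolution a priori --- that is precisely the open problem motivating the paper. The only available relative object is $\pi_*\omega_{\mathfrak{W}/\Delta}$ for the semistable resolution, and its restriction to $t=0$ compares with $\omega_E$ for the whole SNC fibre $E=E_0\cup\bigcup_{i>0}E_i$, not with $\omega_{E_0}$; so cohomology-and-base-change does not produce the quotient $Q_0=\omega_{\mathfrak{X}_0}/\pi_*\omega_{E_0}$ as the special fibre of a coherent sheaf on $\mathfrak{X}$ flat over $\Delta$. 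Without such a structure the rank of the family of maps has no reason to be lower semicontinuous. Note moreover that the reverse inequality $q(\mathfrak{W}_t)\ge q(E_0)$ already follows from what you have established, so your semicontinuity claim is exactly equivalent to the theorem (together with Theorem~\ref{3imp}, which the paper only obtains \emph{after} Theorem~\ref{2imp}); it is the entire content of the proof and cannot be asserted without a construction. As written, the argument is incomplete at this point.
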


\begin{proof}
    Denote $i : E_i \cap (\cup_{j \neq i} E_j) \hookrightarrow E_i$ to the inclusion. Then, it follows from \cite{15} Prop. 5.54, that we have the following exact sequence of mixed Hodge structures
    \begin{align*}
        H_c^{2}(\mathring{E}_i) \to H^2(E_i) \to H^2(E_i \cap (\cup_{j \neq i} E_j))
    \end{align*}
    Now, assume there exists a non-zero cohomology class $[\alpha] \in H^{2,0}(E_i)$. Then, using the exact sequence and that $H^2(E_i \cap (\cup_{j \neq i} E_j))$ is a pure Hodge structure of type $(1,1)$ (see \cite{15}, Prop. 1.11), there is a class $[\gamma] \in H_c^{2}(\mathring{E}_i)$, being sent to $[\alpha]$. So, we can assume that the support of $\alpha$ is contained in $\mathring{E}_i$.

    Following the discussion prior to the Thm. 11.29 of \cite{15}, we get a lifting of the specialization map to the level of mixed Hodge complexes of sheaves that makes the specialization map 
    \begin{align}\label{lk}
        \spl^{\ast} : H^{\ast}(E) \to H^{\ast}(\mathfrak{W}_\infty)
    \end{align}
    a morphism of mixed Hodge structures. Moreover, there is a spectral sequence degenerating to the cohomology of $H^{\ast}(E)$ and a homomorphism from this spectral sequence to the Steenbrink spectral sequence that at the limit gives the morphism (\ref{lk}) in the category of mixed Hodge structures. The first page of the spectral sequence converging to $H^{\ast}(E)$ is:

    \begin{align*}
        E_1^{p,q} = H^{q}(E(p+q+1), \mathbb{Q}) 
    \end{align*}

    Since $H^2(E(2),\mathbb{Q})$ is of type $(1,1)$, the image of $[\alpha]$ in $H^2(E(2),\mathbb{Q})$ is zero, thus, $[\alpha]$ induces a class $[\beta] \in H^{2,0}(E,\mathbb{Q})$ which is non-zero and whose support is in $\mathring{E}_i$. 
    
    Thus, through the specialization morphism we get a class $[\spl^\ast \beta] \in H^2(\mathfrak{W}_\infty)$ which will not vanish. Indeed, $[\alpha] \in H^{2,0}(E_i)$, and $H^0(E(2)), H^2(E(2))$ are pure Hodge structures of type $(1,1)$. So, the image of $[\alpha]$ in $H^2(E(2),\mathbb{Q})$ is zero and $[\alpha]$ is not the image of a cycle of $H^0(E(2))$. Then using Steenbrink spectral sequence we get that $[\spl^\ast \beta]$ is non-zero.

    Now, recall the retraction morphism $r : \mathfrak{W} \to E$ and the projection morphism $k : \mathfrak{W}_\infty \to \mathfrak{W}$. Since $\mathring{E}_i$ is an open subset of $E$, the support of $r^\ast \beta$ must be contained in a neighborhood of a compact set $K \subset \mathring{E}_i$ in $\mathfrak{W}$. Furthermore, since $\mathring{E}_i \subset E \setminus E_0$, the neighbourhood is contained in $W$. Then, the support of $\spl^\ast \beta = k^\ast r^\ast \beta$ is contained in $k^{-1}W = W_\infty$.

    Consider the following injective morphism given in the proof of Prop. \ref{a},
    \begin{align*}
        0 \to H^2(\mathfrak{W}_\infty) \hookrightarrow H^2(W_\infty) \oplus H^2(V_\infty).
    \end{align*}
    The support of $\spl^\ast \beta$ is contained in $W_\infty$, thus, $[\spl^\ast \beta] \neq [0]$ in $H^2(W_\infty)$. 
    
    In order to complete the proof, recall that we denote by $Z$ the closure in $\mathfrak{W}$ of $\cup_{t\neq 0} Z_t$ and let $\pi : \tilde{Z} \to Z$ denote a semistable resolution of $Z \to \Delta$, where if nedeed we first make a base change. Then, we have a morphism $\tilde{Z} \to \mathfrak{W}$ and using \cite{3}, Thm. 7.4., we get a morphism of mixed Hodge structure $H^2(\mathfrak{W}_\infty) \to H^2(\tilde{Z}_\infty)$. Now, since the $H^2(W_\infty) \to H^2(\tilde{Z}_\infty)$ is an isomorphism, $[\spl^\ast \beta] \neq [0]$ in $H^2(\tilde{Z}_\infty)$. Then, if we prove that $H^2(\tilde{Z}_\infty)$ is of type $(1,1)$, we will get a contradiction. But this follows from \cite{15}, Cor. 11.25 and the fact that $H^2(\tilde{Z}_t)$ is a pure Hodge structure of type $(1,1)$.
\end{proof}

\begin{theorem}\label{3imp}
    The first Betti number of $E$ is the same as the first Betti number of $E_0$.
\end{theorem}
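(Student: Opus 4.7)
The strategy is to use Hodge symmetry and the purity of $H^1(E)$ implied by Theorem~\ref{1imp} in order to reduce to the coherent cohomology equality $h^1(E,\mathcal{O}_E) = h^1(E_0,\mathcal{O}_{E_0})$, and then to derive this equality by combining the \v{C}ech spectral sequence for $\mathcal{O}_E$, the surjectivity of its first differential coming from Proposition~\ref{rr}, and an inclusion--exclusion identity following from constancy of $\chi(\mathcal{O})$ along the flat family $g\colon\mathfrak{W}\to\Delta$ and of the local geometric genus along the $\mu$-constant family.

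\emph{Hodge-theoretic reduction.} In Deligne's weight spectral sequence $E_1^{p,q} = H^q(E(p+1),\mathbb{Q})$ converging to $H^{p+q}(E,\mathbb{Q})$, the term $E_2^{1,0}$ is the first rational cohomology of the dual complex of $E$, which vanishes by Theorem~\ref{1imp}. Hence $H^1(E)$ is pure of weight~$1$ and Hodge symmetry gives $b_1(E) = 2h^1(\mathcal{O}_E)$, while for the smooth projective surface $E_0$ one also has $b_1(E_0) = 2h^1(\mathcal{O}_{E_0})$. It therefore suffices to prove $h^1(\mathcal{O}_E) = h^1(\mathcal{O}_{E_0})$.

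\emph{The \v{C}ech spectral sequence of $\mathcal{O}_E$.} The spectral sequence $E_1^{p,q} = H^q(E(p+1),\mathcal{O})$ degenerates at $E_2$, and by Theorem~\ref{1imp} its $E_2^{1,0}$ also vanishes, so $h^1(\mathcal{O}_E) = \dim\ker\phi_{\mathcal{O}}$, where $\phi_{\mathcal{O}}\colon \bigoplus_i H^1(\mathcal{O}_{E_i}) \to H^1(\mathcal{O}_{E(2)})$ is the \v{C}ech differential. Proposition~\ref{rr} supplies surjectivity of its rational version $\phi_{\mathbb{Q}}\colon H^1(E(1),\mathbb{Q}) \to H^1(E(2),\mathbb{Q})$ (the $\twoheadrightarrow$ in row $q=1$). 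Using the Hodge decomposition on the smooth projective strata $E(r)$, $\phi_{\mathcal{O}}$ is identified with the $(0,1)$-component of $\phi_{\mathbb{C}}$, so $\phi_{\mathcal{O}}$ is also surjective, and
\[
h^1(\mathcal{O}_E) = h^1(\mathcal{O}_{E_0}) + \sum_{i>0} h^1(\mathcal{O}_{E_i}) - \dim H^1(\mathcal{O}_{E(2)}).
\]

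\emph{The Euler characteristic identity.} It remains to verify $\sum_{i>0} h^1(\mathcal{O}_{E_i}) = \dim H^1(\mathcal{O}_{E(2)})$. Flatness of $g$ gives $\chi(\mathcal{O}_E) = \chi(\mathcal{O}_{\mathfrak{W}_t})$ for $t\neq 0$, and applying the identity $\chi(\mathcal{O}_{\tilde Y}) = \chi(\mathcal{O}_Y) - p_g(Y,0)$ to the two resolutions $\mathfrak{W}_t \to \mathfrak{X}_t$ and $E_0\to\mathfrak{X}_0$---together with $\chi(\mathcal{O}_{\mathfrak{X}_t})$ constant (fixed-degree hypersurface in $\mathbb{P}^3$) and $p_g(\mathfrak{X}_t,0)$ constant by the $\mu$-constant hypothesis---yields $\chi(\mathcal{O}_{\mathfrak{W}_t}) = \chi(\mathcal{O}_{E_0})$, and hence $\chi(\mathcal{O}_E) = \chi(\mathcal{O}_{E_0})$. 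Expanding the left-hand side by inclusion--exclusion over the SNCD $E$, dropping $h^2(\mathcal{O}_{E_i}) = 0$ for $i>0$ by Theorem~\ref{2imp}, and absorbing the relation $k - N_2 + N_3 = 0$ (the Euler characteristic of the dual complex equals $1$ by Theorem~\ref{1imp}), the equality $\chi(\mathcal{O}_E) = \chi(\mathcal{O}_{E_0})$ collapses to the desired identity, which substituted into the displayed formula gives $h^1(\mathcal{O}_E) = h^1(\mathcal{O}_{E_0})$. The main subtlety I anticipate is the descent from $\mathbb{Q}$-surjectivity to $\mathcal{O}$-surjectivity of $\phi$, which requires compatibility of the \v{C}ech differential with the Hodge decomposition on each smooth piece $E(r)$.
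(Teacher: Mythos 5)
Your proof is correct and follows essentially the same route as the paper: both arguments rest on the same four inputs (acyclicity of the dual complex from Theorem \ref{1imp}, $h^{2,0}(E_i)=0$ for $i>0$ from Theorem \ref{2imp}, surjectivity of the $E_1$-differential $H^1(E(1))\to H^1(E(2))$ from Proposition \ref{rr}, and $\chi(\mathcal{O}_E)=\chi(\mathcal{O}_{E_0})$ obtained from flatness, Laufer's formula and constancy of $p_g$ under $\mu$-constant deformation), and both ultimately compute $b_1(E)$ as the dimension of the kernel of that differential. The only real difference is bookkeeping: you track $h^1(\mathcal{O})$ and pass to $b_1$ via purity and Hodge symmetry, whereas the paper converts $\chi(\mathcal{O})$ into topological Euler characteristics and works with Betti numbers throughout; your version is arguably cleaner, and the one point worth making explicit is why $h^1(\mathcal{O}_E)=\dim\ker\phi_{\mathcal{O}}$ (in degree $1$ this needs no general $E_2$-degeneration, since $E_2^{1,0}$ and the target $E_2^{2,0}$ of the only relevant higher differential are both cohomology groups of the dual complex and vanish by Theorem \ref{1imp}).
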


\begin{proof}
    Consider the following short exact sequence (see \cite{15}, Example 7.23):

\begin{align*}
    0 \to \mathcal{O}_{E} \to (a_0)_{\ast}\mathcal{O}_{E(1)} \to (a_1)_{\ast}\mathcal{O}_{E(2)} \to (a_2)_{\ast}\mathcal{O}_{E(3)} \to 0.
\end{align*}

    

Using that Euler characteristic is additive in exact sequences (see \cite{19}, Exercise 18.4.A) and that the sheaf cohomology remains the same after doing the pushforward by an affine morphism (see \cite{4}, Exercise III.5.1), we get:

\begin{align}\label{t}
    \chi(\mathcal{O}_E) = \sum_{i=0}\chi(\mathcal{O}_{E_i}) - \sum_{i=0,j>i}\chi(\mathcal{O}_{E_i \cap E_j}) + \sum_{i=0,j>i,k>j}\chi(\mathcal{O}_{E_i \cap E_j \cap E_k})
\end{align}
Since $\overline{f}$ and $g$ are flat morphisms, we get $\chi(\mathcal{O}_{\mathfrak{X}_0}) = \chi(\mathcal{O}_{\mathfrak{X}_t})$ and $\chi(\mathcal{O}_{E}) = \chi(\mathcal{O}_{\mathfrak{W}_t})$, (see \cite{19}, Theorem 24.7.1). Besides, on one hand, it is known that the exponents are constant under $\mu$-constant deformation of $f_0$, see \cite{20} and \cite{16} and, on the other hand, the geometric genus equals the number of the exponents not greater than one, see \cite{14}. Then, $p_g(X_0) = p_g(X_t)$. 
Moreover, Mayer-Vietoris gives (see \cite{9}, proof of Theorem~1):
\begin{align*}
    \chi(\mathcal{O}_{E_0}) = \chi(\mathcal{O}_{\mathfrak{X}_0}) - p_g(X_0),\\
    \chi(\mathcal{O}_{\mathfrak{W}_t}) = \chi(\mathcal{O}_{\mathfrak{X}_t}) - p_g(X_t)
\end{align*}
Thus, we get $\chi(\mathcal{O}_{E_0}) = \chi(\mathcal{O}_{\mathfrak{W}_t}) = \chi(\mathcal{O}_{E})$.
And using, (\ref{t}), we get
\begin{align}\label{tu}
    \sum_{i=1}\chi(\mathcal{O}_{E_i}) - \sum_{i=0,j>i}\chi(\mathcal{O}_{E_i \cap E_j}) + \sum_{i=0,j>i,k>j}\chi(\mathcal{O}_{E_i \cap E_j \cap E_k}) = 0
\end{align}

Furthermore, Hodge decomposition on compact Kähler manifolds gives:

\begin{align}\label{212}
    \chi_T(E_i) = 2\chi(\mathcal{O}_{E_i}) - 2h^{1,0}(E_i) + h^{1,1}(E_i), \\
    \chi_T(E_i\cap E_j) = 2\chi(\mathcal{O}_{E_i\cap E_j})\\
    \chi_T(E_i\cap E_j \cap E_k) = \chi(\mathcal{O}_{E_i\cap E_j \cap E_k}).
\end{align}
Since we have seen in Theorem \ref{2imp} that for $i \neq 0$, $E_i$ are pure Hodge structures of type (1,1), we have, in particular, $\chi_T(E_i) = 2\chi(\mathcal{O}_{E_i}) - 2h^{1,0}(E_i) + b_2(E_i)$.

Substituting these equalities in (\ref{tu}),we get:

\begin{align*}
    \hspace*{-5.8mm} \frac{1}{2}\sum_{i > 0}\chi_T(E_i) + \sum_{i > 0}h^{1,0}(E_i) - \frac{1}{2}\sum_{i > 0} b_2(E_i) - \frac{1}{2}\sum_{\substack{i = 0\\j > i}}\chi_T(E_i \cap E_j) + \sum_{\substack{i=0\\j > i\\k>j}}\chi_T(E_i \cap E_j \cap E_k) = 0.
\end{align*}
Developing the terms of the equations, we get
\begin{align*}
    \hspace{-5.8mm} \sum_{i > 0}[b_0(E_i) - \frac{1}{2}b_1(E_i)] - \sum_{\substack{i = 0\\j > i}}[b_0(E_i \cap E_j) - \frac{1}{2}b_1(E_i \cap E_j)] + \sum_{\substack{i=0\\j > i\\k>j}}b_0(E_i \cap E_j \cap E_k) = 0.
\end{align*} 
Moreover, Theorem \ref{1imp} says that 
\begin{align*}
    \sum_{i > 0}b_0(E_i) - \sum_{\substack{i = 0\\j > i}}b_0(E_i \cap E_j) + \sum_{\substack{i=0\\j > i\\k>j}}b_0(E_i \cap E_j \cap E_k) = 0.
\end{align*} Thus, we get 
\begin{align}\label{213}
    \sum_{i > 0} b_1(E_i) = \sum_{\substack{i = 0\\j > i}} b_1(E_i \cap E_j)
\end{align}

Then, the kernel of the surjective morphism $H^1(E(1)) \to H^1(E(2))$ of the first page of Steenbrink spectral sequence has the same dimension as $H^1(E_0)$, so in particular, $H^1(\mathfrak{W}_t) \cong H^1(E_0)$. Moreover, the spectral sequence degenerating to the cohomology of $E$ gives $H^1(\mathfrak{W}_t) \cong H^1(E)$, so we conclude that $H^1(E) \cong H^1(E_0)$. 

\end{proof}

\begin{corollary}
    $K^2_{\mathfrak{W}_t} - K^2_{E_0} = b_2(E_0) - b_2(\mathfrak{W}_t)$.
\end{corollary}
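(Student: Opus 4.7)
The plan is to deduce the corollary by combining Noether's formula for smooth projective surfaces with the two intermediate equalities established in the proof of Theorem \ref{3imp}: namely $\chi(\mathcal{O}_{\mathfrak{W}_t})=\chi(\mathcal{O}_{E_0})$ and the isomorphism $H^1(\mathfrak{W}_t,\mathbb{Q})\cong H^1(E_0,\mathbb{Q})$.

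First I would observe that both $\mathfrak{W}_t$ (for generic $t\in\Delta$) and $E_0$ are smooth connected projective surfaces: $\mathfrak{W}_t$ is the general fibre of the semistable morphism $g\colon\mathfrak{W}\to\Delta$, whose total space is smooth, and $E_0$ is a smooth irreducible component of the simple normal crossings divisor $E=\mathfrak{W}_0$. Noether's formula $12\chi(\mathcal{O}_S)=K_S^2+\chi_{\mathrm{top}}(S)$ applied to each and subtracted gives
\begin{equation*}
K_{\mathfrak{W}_t}^2 - K_{E_0}^2 \;=\; 12\bigl(\chi(\mathcal{O}_{\mathfrak{W}_t})-\chi(\mathcal{O}_{E_0})\bigr) \;-\; \bigl(\chi_{\mathrm{top}}(\mathfrak{W}_t)-\chi_{\mathrm{top}}(E_0)\bigr).
\end{equation*}
The first summand on the right vanishes by the computation inside the proof of Theorem \ref{3imp}.

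For the topological Euler characteristics I would use the standard formula $\chi_{\mathrm{top}}(S)=2-2b_1(S)+b_2(S)$, valid for any connected smooth projective surface, together with the identification $b_1(\mathfrak{W}_t)=b_1(E_0)$ extracted from the proof of Theorem \ref{3imp} (where the isomorphism $H^1(\mathfrak{W}_t)\cong H^1(E_0)$ is exhibited via the Steenbrink spectral sequence). This reduces the difference of topological Euler characteristics to $b_2(\mathfrak{W}_t)-b_2(E_0)$, whence
\begin{equation*}
K_{\mathfrak{W}_t}^2 - K_{E_0}^2 \;=\; -\bigl(b_2(\mathfrak{W}_t)-b_2(E_0)\bigr) \;=\; b_2(E_0) - b_2(\mathfrak{W}_t),
\end{equation*}
as desired.

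No substantial obstacle is anticipated; the argument is essentially bookkeeping on top of Theorem \ref{3imp} and Noether's formula. The only point worth spelling out carefully in the write-up is that Theorem \ref{3imp} really provides $b_1(\mathfrak{W}_t)=b_1(E_0)$ (and not merely $b_1(E)=b_1(E_0)$), which is clear from the chain of isomorphisms $H^1(\mathfrak{W}_t)\cong H^1(E)\cong H^1(E_0)$ appearing at the end of that proof.
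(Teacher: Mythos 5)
Your argument is correct and is essentially the paper's own proof: the paper likewise invokes Noether's formula together with $\chi(\mathcal{O}_{E_0})=\chi(\mathcal{O}_{\mathfrak{W}_t})$ and the equality of the Betti numbers $b_i$ for $i\neq 2$ (your $2-2b_1+b_2$ bookkeeping just makes explicit the use of connectedness and Poincar\'e duality for $b_3$ and $b_4$). No gaps; the one caveat you flag — that Theorem \ref{3imp} really yields $b_1(\mathfrak{W}_t)=b_1(E_0)$ and not only $b_1(E)=b_1(E_0)$ — is indeed the right point to be careful about, and your reading of the chain of isomorphisms at the end of that proof is accurate.
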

\begin{proof}
    Noether formula gives the result, since $\chi(\mathcal{O}_{E_0}) = \chi(\mathcal{O}_{\mathfrak{W}_t})$ and  $b_i(\mathfrak{W}_t) = b_i(E_0)$ for $i \neq 2$.
\end{proof}

\begin{theorem}
    $\sum_{i=0}E^3_i = -3b_0(E(3))$.
\end{theorem}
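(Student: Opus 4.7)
The plan is to combine the Kollár--Shepherd-Barron Noether-adjunction technique of \cite{8} with the Hodge-theoretic inputs provided by Theorems~\ref{1imp}, \ref{2imp}, \ref{3imp} and the additional exactness conditions extracted from Proposition~\ref{rr}. Since the divisor $E = \sum_i E_i$ is linearly equivalent to the smooth fiber $\mathfrak{W}_t$ and disjoint from $E$, every intersection $E\cdot \alpha$ with a cycle $\alpha$ supported on $E$ vanishes. This yields two core identities: the triple-point formula
\[
C_{ij}^2\big|_{E_i}+C_{ij}^2\big|_{E_j}+\#(\text{triple points on }C_{ij})=0
\]
for each double curve $C_{ij}=E_i\cap E_j$, and $E_i^3 = -\sum_{j\neq i}C_{ij}^2|_{E_j}$ for each component; in particular $E_i|_{E_i} = -\sum_{j\ne i}C_{ij}$ in $\mathrm{Pic}(E_i)$.

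Next, I would invoke Noether's formula $12\chi(\mathcal{O}_{E_i})=K_{E_i}^2+\chi_{\mathrm{top}}(E_i)$ on each $E_i$ and sum over $i$. Theorem~\ref{2imp} gives $h^{2,0}(E_i)=0$ for $i>0$, collapsing $\chi(\mathcal{O}_{E_i})$ and $\chi_{\mathrm{top}}(E_i)$ into expressions in $h^{1,0}(E_i)$ and $h^{1,1}(E_i)$ alone. The Mayer--Vietoris decompositions of $\chi(\mathcal{O}_E)$ and $\chi_{\mathrm{top}}(E)$ on the stratification $E(1)\leftarrow E(2)\leftarrow E(3)$, together with $\chi(\mathcal{O}_E)=\chi(\mathcal{O}_{\mathfrak{W}_t})$ from the proof of Theorem~\ref{3imp} and the relations in Theorems~\ref{1imp} and~\ref{3imp}, yield a closed form for $\sum_i K_{E_i}^2$ in terms of $K_{\mathfrak{W}_t}^2$, the genera $g_{ij}$ of the double curves, and $b_0(E(3))$. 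Combining this with the adjunction expansion $K_{E_i}^2 = K_{\mathfrak{W}}^2\cdot E_i + 2K_{\mathfrak{W}}\cdot E_i^2 + E_i^3$, summed using $\sum_i K_{\mathfrak{W}}^2\cdot E_i = K_{\mathfrak{W}_t}^2$ (since $E\sim\mathfrak{W}_t$ has trivial normal bundle) and the adjunction identity $K_{\mathfrak{W}}\cdot C_{ij} = 2g_{ij}-2+\#(\text{triple points on }C_{ij})$ for each $C_{ij}$ in the smooth threefold, isolates $\sum_i E_i^3$ up to a residual genus contribution and a multiple of $b_0(E(3))$.

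The final input is the exactness at $E_2^{-1,4}$ of the first page of Steenbrink in Proposition~\ref{rr}: combined with Theorem~\ref{2imp} (purity of $H^2(E_i)$ for $i>0$), this exactness becomes a Gysin--restriction relation among the fundamental classes of the double curves $C_{ij}$ in the surfaces $E_i$, eliminating the residual $\sum g_{ij}$ contribution and producing the claimed identity $\sum_i E_i^3 = -3\,b_0(E(3))$. The hardest part will be the sign-and-multiplicity bookkeeping: several independent arguments contribute factors of $3$ (each triple point lies on three double curves, the Gysin differentials carry multiplicity three, and Noether summed over components generates a further factor of three), and only the Hodge-theoretic cancellations enforced by Theorems~\ref{2imp} and~\ref{3imp} combine these into the clean coefficient $-3$.
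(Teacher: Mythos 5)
Your overall strategy is the paper's: compute $K^2_{\mathfrak{W}_t}-K^2_{E_0}$ (equivalently, control $\sum_i K^2_{E_i}$) in two ways --- once Hodge-theoretically via Noether's formula together with the constancy of $\chi(\mathcal{O})$ and the Betti-number relations of Theorems \ref{1imp}, \ref{2imp}, \ref{3imp}, and once geometrically via the Koll\'ar--Shepherd-Barron adjunction expansion of $K^2_{\mathfrak{W}}\cdot\sum_i E_i$ --- and then extract $\sum_i E_i^3$ from the comparison using the triviality of the normal bundle of $E$, i.e.\ $(\sum_i E_i)^3=0$ and the triple-point formula. All of those ingredients are exactly the ones used in the paper's proof.

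The one step that would fail as you describe it is your ``final input.'' You propose to eliminate the residual genus contribution $\sum_{i<j} g_{ij}$ via the exactness of the first page at $E^{-1,4}$. That exactness sits in the weight-$4$ row $H^0(E(3))(-2)\hookrightarrow H^2(E(2))(-1)\to H^4(E(1))$; since $E(2)$ is a disjoint union of curves and $E(3)$ a finite set, combined with $H^4(\mathfrak{W}_\infty)\cong\mathbb{Q}$ it only yields the numerical relation $b_0(E(1))-b_0(E(2))+b_0(E(3))=1$, i.e.\ the Poincar\'e-dual restatement of Theorem \ref{1imp} about the dual complex. It carries no information about the genera of the double curves, so no ``Gysin--restriction relation'' of the kind you invoke comes out of it. In the paper the genus terms disappear for a different, more elementary reason: the relation $\sum_{i>0}b_1(E_i)=b_1(E(2))$ (equation (\ref{213}), established inside the proof of Theorem \ref{3imp}) turns the Noether sum $\sum_{i\geq 1}K^2_{E_i}$ into an expression containing $-4b_1(E(2))$, which cancels against the $+4b_1(E(2))$ produced by adjunction on the double curves in the cross-term $2\sum_i K_{E_i}\cdot(\sum_{j\neq i}E_j|_{E_i})$. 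The piece of the Steenbrink page that is genuinely needed is instead the weight-$2$ row, which gives $b_2(\mathfrak{W}_t)=b_2(E(1))+b_0(E(3))-2b_0(E(2))$ and, fed into the Corollary $K^2_{\mathfrak{W}_t}-K^2_{E_0}=b_2(E_0)-b_2(\mathfrak{W}_t)$, produces the comparison identity (\ref{em}). Since Theorem \ref{3imp} and its proof are already among your listed inputs, the argument closes up once you reroute this step; but as written, the mechanism you name for the decisive cancellation is not the one that works.
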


\begin{proof}
    On one hand, making use of previous Corollary and the second row of the first page of Steenbrink spectral sequence, we get \begin{align} \label{em}
    K^2_{\mathfrak{W}_t} - K^2_{E_0} = -\sum_{i=1}b_2(E_i) - b_0(E(3)) + 2b_0(E(2)).
    \end{align} 
    On the other hand, using the technique of \cite{8}, we have
\begin{align*}
    K^2_{\mathfrak{W}_t} = K^2_{\mathfrak{W}} \cdot \sum_{i=0} E_i = \sum_{i=0}K^2_{\mathfrak{W}_{|E_i}} = \sum_{i=0}(K_{E_i} + \sum_{j \neq i} E_{j{|E_i}})^2 = \\
    = \sum_{i=0} K^2_{E_i} + 2\sum_{i=0}K_{E_i} \cdot (\sum_{j \neq i} E_{j{|E_i}}) +  \sum_{i=0}(\sum_{j\neq i} E_{j{|E_i}})^2 
\end{align*}
Thus, we get
    \begin{align*}
    K^2_{\mathfrak{W}_t} - K^2_{E_0} = \sum_{i=1} K^2_{E_i} + 2\sum_{i=0}K^2_{E_i} \cdot (\sum_{j \neq i} E_{j{|E_i}}) +  \sum_{i=0}(\sum_{j\neq i} E_{j{|E_i}})^2 
    \end{align*}
Now, let's develop these terms. First, using Noether formula and equalities from (\ref{212}), (\ref{213}) and Theorem \ref{1imp}, we get
    \begin{align*}
        \hspace*{-0.4cm}\sum_{i=1} K^2_{E_i} = \sum_{i=1} \left( 12 \chi(\mathcal{O}_{E_i}) - \chi_{T}(E_i) \right ) = 10b_0(E(2)) - 10b_0(E(3)) - 4b_1(E(2)) - \sum_{i=1}b_2(E_i)
    \end{align*}
Second, since $E_{j{|E_i}}$ is a curve on a smooth surface, its arithmetic genus is given by the adjunction formula hence, 
    \begin{align*}
        2\sum_{i=0}K_{E_i} \cdot (\sum_{j \neq i} E_{j{|E_i}}) = 2\sum_{\substack{i = 0\\j \neq i}} \left (2b_1(E_i \cap E_j) - 4b_0(E_i \cap E_j) - 2(E_{j{|E_i}})^2\right ) = \\
        = 4b_1(E(2)) - 8b_0(E(2)) - 2\sum_{\substack{i = 0\\j \neq i}}(E_{j{|E_i}})^2.
    \end{align*}
Third, 
\begin{align*}
    \sum_{i=0}(\sum_{j\neq i} E_{j{|E_i}})^2 = \sum_{\substack{i = 0\\j \neq i}}(E_{j{|E_i}})^2 + 6b_0(E(3)).
\end{align*}
Putting everything together, we get:
    \begin{align}\label{er}
    K^2_{\mathfrak{W}_t} - K^2_{E_0} = 2b_0(E(2)) - 4b_0(E(3)) - \sum_{i=1}b_2(E_i) - \sum_{\substack{i = 0\\j \neq i}}(E_{j{|E_i}})^2.
    \end{align}
\end{proof}
 Equaling (\ref{em}) and (\ref{er}), we get:
    \begin{align}
        3b_0(E(3))  + \sum_{\substack{i = 0\\j \neq i}}(E_{j{|E_i}})^2 = 0
    \end{align}
And since $(\sum_{i=0}E_i)^3 = 0$, we get 
    \begin{align}
        3b_0(E(3))  + \sum_{i=0}E_i^3 = 0.
    \end{align}


\begin{thebibliography}{9}
\bibitem{1}
T. de Fernex and J. Koll\'ar, C. Xu. \textit{The dual complex of singularities}. Higher dimensional algebraic geometry—in honour of Professor Yujiro Kawamata’s sixtieth birthday, pp. 103–129, (2017).


\bibitem{2} J. Fernández de Bobadilla. \textit{Topological Equisingularity: Old Problems from a New Perspective (with an Appendix by G.-M. Greuel and G. Pfister on SINGULAR)}. Handbook of Geometry and Topology of Singularities III. Springer, 145–202, (2022)

\bibitem{3}
F. Guillén, V. Navarro Aznar, P. Pascual-Gainza and F. Puerta. \textit{Hyperrésolutions Cubiques et Descente cohomologique}. Springer, (1988).

\bibitem{4}
R. Hartshorne. \textit{Algebraic geometry}. New York: Springer (2010).

\bibitem{5}
A. Hatcher. \textit{Algebraic Topology}. Cambridge University Press, (2002).

\bibitem{6}
H. King. \textit{Topological Type in Families of Germs}. Inventiones Math., Vol. 62 (1980), 1-13. 10,
11

\bibitem{7}
J. Kollár and S. Mori. \textit{Birational geometry of algebraic varieties}. Cambridge University Press, (2008).

\bibitem{8}
J. Kollár and N.I. Shepherd-Barron. \textit{Threefolds and deformations of surface singularities}. In Inventiones Mathematicae (Vol. 91, Issue 2, pp. 299–338). Springer Science and Business Media LLC. https://doi.org/10.1007/bf01389370, (1988).

\bibitem{9}
H. B. Laufer. \textit{On $\mu$ for surface singularities}. Several complex variables. Proc. Symp.
Pure Math. AMS, XXX, Part 1 (1977).

\bibitem{10}
H. Laufer. \textit{Weak Simultaneous Resolutions for Deformations of Gorenstein Surface Singularities}.
Proceedings of Symposia in pure mathematics of the AMS. vol. 40. Par II, 1-29, (1983).

\bibitem{11}
D. T. Lê and C. P. Ramanujam, \textit{The invariance of Milnor’s number implies the invariance of the topological type}. American Journal of Mathematics, 98, no. 1, 67–78, (1976).



\bibitem{12}
J. Mather. \textit{Stability of $C^\infty$-mappings: III, Finitely determined map germs}. Inst. Hautes Études Sci. Publ. Math. 35, 24-34, (1986).

\bibitem{13}
J. Milnor.  \textit{Singular points of complex hypersurfaces}. Annals of Mathematics Studies, No. 61, Princeton University Press, Princeton, N.J.; University of Tokyo Press, Tokyo, (1968).


\bibitem{14}
M. Saito. \textit{On the exponents and the geometric genus of an isolated hypersurface singularity}.
Proc. Sump. Pure Math. 40, Part 2, AMS, 465-472 (1983)

\bibitem{15} C. A. M. Peters and J. H. M. Steenbrink. \textit{Mixed Hodge structures}. Ergebnisse der Mathematik und ihrer Grenzgebiete vol. 52, Springer, (2008).


\bibitem{16} J. H. M. Steenbrink. \textit{Semicontinuity of the singularity spectrum}. Invent. math. 79, 557-565, (1985).

\bibitem{17}
J. G. Timourian. \textit{The Invariance of Milnor’s Number Implies Topological Triviality}. American
Journal of Mathematics Vol. 99, No. 2 (Apr., 1977), pp. 437-446 11

\bibitem{18} J. C. Tougeron. \textit{Ideaux de fonctions différentiables}. Ergebnisse der Mathematik und iher Grenzgebiete 2 Folge 71. Springer (1972).

\bibitem{19}
R. Vakil. \textit{The Rising Sea: Foundations of Algebraic Geometry}, https://math.stanford.edu/vakil/216blog/FOAGaug2922public.pdf, (2022).


\bibitem{20}
 A. N. Varchenko. \textit{The complex exponent of a singularity does not change along strata $\mu$=const}. Funct. An. Appl. 16, 1-10, (1982),


\end{thebibliography}
\end{document}